\newtheorem{thm}{Theorem}[section]
\newtheorem{lem}[thm]{Lemma}
\theoremstyle{definition}
\theoremstyle{definition}
\newtheorem{rem}[thm]{Remark}
\numberwithin{equation}{section}
\renewcommand\Re{\operatorname{Re}}
\renewcommand\Im{\operatorname{Im}}
\def\hatgap{2pt}
\def\subdown{-2pt}
\newcommand\reallywidehat[2][]{%
\renewcommand\stackalignment{l}%
\stackon[\hatgap]{#2}{%
\stretchto{%
    \scalerel*[\widthof{$#2$}]{\kern-.6pt\bigwedge\kern-.6pt}%
    {\rule[-\textheight/2]{1ex}{\textheight}}
}{0.5ex}
_{\smash{\belowbaseline[\subdown]{\scriptstyle#1}}}%
}}
\begin{document}
\title[Fokas method for IBVPs involving mixed derivatives]{Fokas method for linear boundary value problems involving mixed spatial
derivatives}%
\author[A. Batal]{A.Batal}
\address[A. Batal]{Department of Mathematics, Izmir Institute of Technology, TR}%
\email{abatal@iyte.edu.tr}
\author[A.S. Fokas]{A.S. Fokas}
\address[A.S. Fokas]{DAMTP, University of Cambridge, UK}%
\email{tf227@cam.ac.uk}%
\author[T. {\" O}zsarı]{T. {\" O}zsarı*}%
\address[T. {\" O}zsarı]{Department of Mathematics, Izmir Institute of Technology, TR}%
\email{turkerozsari@iyte.edu.tr}%


\thanks{T. {\" O}zsar{\i} and A.Batal's research are supported by T\"{U}B\.{I}TAK 1001 Grant \#117F449 }%
\thanks{*Corresponding author (Türker Özsarı, turkerozsari@iyte.edu.tr)}%
\subjclass[2010]{35A22; 35C15; 35G16; 35K20; 35K35; 35Q41}%
\keywords{Fokas method, unified transform method, mixed derivatives, analyticity issues}%
\begin{abstract}
  We obtain solution representation formulas for some linear initial boundary value problems posed on the half space that involve mixed spatial derivative terms via the unified transform method (UTM), also known as the Fokas method.  We first implement the method on the second order parabolic PDEs; in this case one can alternatively eliminate the mixed derivatives by a linear change of variables.  Then, we employ the method to biharmonic problems, where it is not possible to eliminate the cross term via a linear change of variables. A basic ingredient of the UTM is the use of certain invariant maps.  It is shown here that these maps are well-defined provided that certain analyticity issues are appropriately addressed.
\end{abstract}
\maketitle
\tableofcontents
\section{Introduction}
The rigorous wellposedness analysis of solutions of linear and nonlinear partial differential equations (PDEs) in function spaces requires a suitable notion of \emph{solution}.  This is especially important when a PDE is studied at low regularity level where classical derivatives do not exist.   In other words, at low regularity level, one cannot verify whether a given function satisfies the given PDE in the pointwise sense. Nevertheless, there exist other mathematically precise notions of solution which do not require differentiability in the classical sense.  These are referred to as \emph{weak} solutions.  There are several well known ways to define such solutions.  The most classical one is to consider a PDE in the \emph{distributional} sense rather than pointwise sense, so that all the derivatives can be transferred onto smooth test functions. In this way, the sought after solution, which is a function or a generalized function, does not need to have derivatives itself in order to satisfy the given PDE.  Another approach that can be used for defining weak solutions to initial boundary value problems (IBVPs) is to employ \emph{representation formulas}.   These formulas can either be in an abstract functional analytic form, such as semigroup formulas, or can be explicit, such as those obtained via transform methods.  The explicit formulas are first obtained for smooth solutions under the assumption that the sought after solution and the given data are smooth and have sufficient decay properties. After an integral formula is obtained, it can then be used as the \emph{definition} of a generalized (weak) solution (indeed, the integral still makes sense under much weaker assumptions regarding the smoothness and decay assumptions of the data).  The unified transform method  (\cite{Decon14,Fok97,Fok02,Fok08}) provides one of the most efficient and general ways to obtain such integral formulas for defining solutions of IBVPs.  Most importantly, the integral representations obtained via UTM have a very suitable space-time structure allowing rigourous wellposedness analysis for linear and nonlinear initial boundary value problems; such results are obtained, for example, in \cite{Him17}, \cite{Tian17}, \cite{alex2018wellposedness}, and \cite{Ozs19} for Schrödinger and biharmonic Schrödinger type equations, in \cite{Him19} for Korteweg-de Vries equation, and in \cite{him15} for the ``good'' {B}oussinesq equation.

The present paper implements the UTM on PDEs with mixed spatial derivatives in 2D, and also shows that the analyticity issues arising from the appearance of square roots can be resolved. Until now, the implementation of the UTM on equations involving mixed spatial derivatives was limited to only a few papers, see \cite{fok09} and \cite{mant11}. Furthermore, the general application of the UTM even in 1D often involves the appearance of square roots.  The related analyticity issues must be handled with suitably chosen branch cuts. For instance in \cite{Decon14} and \cite{him15-2}, the authors chose branch cuts intersecting with the standard contour of integration of the UTM which motivated them to deform this contour. In comparison with these papers, we show that it is possible to choose a branch cut which is strictly away from the standard contour except at branch points. This allows us to justify a solution formula given on the standard contour with no deformation.  In particular, details are presented for three canonical problems on the half space: a second order parabolic equation, a fourth order heat equation, and the biharmonic Schrödinger equation, namely,
\begin{eqnarray}
  u_t-u_{xx}-2u_{xy}-u_{yy}&=0,\label{11}\\
  u_t+\Delta^2 u&=0\label{12} \\
  iu_t+\Delta^2 u&=0. \label{13}
\end{eqnarray}
If mixed derivatives are present, one may try to eliminate the cross terms via a linear change of variables. One drawback of this approach is that for higher order PDEs such as the fourth order heat equation and the biharmonic Schrödinger equation, there does not exist a linear change of variables that allows the elimination of the cross term(s). Another drawback is that, depending on the coefficients, the relevant change of variable may distort the simple geometry of the spatial domain, making it more difficult to work in the new geometry. In this paper, we treat PDEs with mixed spatial derivatives directly and present the details of the UTM for obtaining integral representation for the three canonical problems \eqref{11}-\eqref{13}.

The implementation of the UTM involves three main steps: (i) finding a global relation, which is an identity involving the Fourier transform of the solution and of the initial data, as well as suitable $t$-transforms of the unknown and known boundary values; (ii) finding maps which keep the spectral inputs of the $t$-transforms of the boundary traces invariant; and (iii) performing a subtle contour deformation.  The second and the last steps require analyticity of various integrands as well as analyticity of the associated invariance maps in some suitable regions in the complex plane. It turns out that for some evolution equations, such as the biharmonic Schrödinger equation, one encounters certain analyticity issues. For example, the invariance map defined via the standard square root function is not analytic on open domains containing the standard contour of integration constructed via the Fokas method. Furthermore, it is proven here that for the above equations it is not possible to find an invariance map which is analytic on the boundary of this contour.  We treat this issue by choosing a slightly rotated branch cut for the square root function, proving that this moves the domain of nonanaliticity of the invariance maps away from the contour of integration.  This issue is not present in the case of the fourth order heat equation because the domain of nonanaliticity of the invariance maps is already away from the contour of integration.

\section{Second order parabolic PDEs}
In this section, we implement the UTM on the second order parabolic equation on the half space, which involves a mixed spatial derivative term:
 \begin{align}
     \label{maineq1}u_t-u_{xx}-2u_{xy}-u_{yy}&=0,\quad (x,y,t)\in\mathbb{R}\times \mathbb{R}_+\times (0,T),\\
     \label{bc1}u(x,0,t)&=g_0(x,t), \quad (x,t)\in\mathbb{R}\times (0,T),\\
     \label{init1}u(x,y,0)&=u_0(x,y), \quad \quad (x,y)\in\mathbb{R}\times\mathbb{R}_+.
     \end{align}
We begin by defining the \emph{half space Fourier transform} in the variables $(x,y)$:
\begin{equation}\label{FT}
  \hat{u}(\kappa,\lambda,t) := \int_{-\infty}^\infty\int_{0}^\infty \exp(-ix\kappa-iy\lambda)u(x,y,t)dydx, \quad \kappa\in \mathbb{R}, \Im\lambda\le 0.
\end{equation}
In what follows, we will assume that $u$ is sufficiently smooth in $\Omega_T=\mathbb{R}\times \mathbb{R}_+\times (0,T)$ up to the boundary of $\Omega_T$, and also that u decays sufficiently fast as $|(x,y)|\rightarrow \infty$.  We treat the spectral variables $\kappa$ and $\lambda$ as real and complex, respectively.  Note that the condition $\Im\lambda\le 0$ is essential for \eqref{FT} to make sense.  Applying the half space Fourier transform to
\eqref{maineq1}-\eqref{init1} and integrating by parts, we obtain
\begin{equation}\label{FTofMaineq}
\begin{split}
\hat{u}_t(\kappa,\lambda,t) = -(\kappa+\lambda)^2\hat{u}(\kappa,\lambda,t)-\reallywidehat[x]{g_1}(\kappa,t)-i(\lambda+2\kappa) \reallywidehat[x]{{g}_0}(\kappa,t),
\end{split}
\end{equation} where $g_1(x,t):=u_y(x,0,t)$ (an unknown trace) and $\reallywidehat[x]{(\cdot)}$ denotes the classical Fourier transform in the $x$ variable, i.e.,
\begin{equation}\label{gjhat}
  \reallywidehat[x]{g_j}(\kappa,t):= \int_{-\infty}^\infty\exp(-ix\kappa)g_j(x,t)dx.
\end{equation}
The solution of \eqref{FTofMaineq} together with the half space Fourier transform of \eqref{init1}, namely,
\begin{equation}\label{FTofInit}
  \hat{u}(\kappa,\lambda,0) = \widehat{u_0}(\kappa,\lambda),
\end{equation} yield the following equation:
\begin{equation}\label{sol1}
\begin{split}
  &\hat{u}(\kappa,\lambda,t)\exp((\kappa+\lambda)^2t)\\
  &= \widehat{u_0}(\kappa,\lambda)-\widetilde{g_1}(\kappa,(\kappa+\lambda)^2,t)-i(\lambda+2\kappa) \widetilde{g_0}(\kappa,(\kappa+\lambda)^2,t),\quad \kappa\in \mathbb{R}, \Im\lambda\le 0,
  \end{split}
\end{equation}where $\widetilde{g_j},j=0,1$, denote the \emph{t-transforms} of the known and unknown boundary values:
 \begin{equation}\label{ttransform}
    \widetilde{g_j}(\kappa,\eta,t)=\int_0^t\exp(\eta \tau)\reallywidehat[x]{g_j}(\kappa,\tau)d\tau, \quad j=0,1.
 \end{equation}
 The identity \eqref{sol1}, which relates the half space Fourier transform of the solution, the half space Fourier transform of the initial datum, and the $t$-transforms of known and unknown boundary traces, is referred to as a \emph{global relation}.
\begin{rem}
 An alternative way to obtain equation \eqref{sol1} directly is to use the divergence theorem.  Indeed, let $v$ be a solution of the formal adjoint equation of equation \eqref{maineq1}, namely let $v$ be a solution of the equation
 \begin{equation}\label{adjoint}
   v_t+v_{xx}+2v_{xy}+v_{yy}=0.
 \end{equation} Multiplying equations \eqref{sol1} and \eqref{adjoint} by $u$ and $v$ respectively and then adding the resulting equations we obtain an equation that can be written in divergence form:
 \begin{equation}\label{divform}
   (uv)_t+(uv_x-vu_x+uv_y-vu_y)_x+(uv_y-vu_y+uv_x-vu_x)_y=0. \end{equation}
Choosing the particular solution $v=e^{-i\kappa x-i\lambda y+(\kappa+\lambda)^2t}$ and employing the divergence theorem in $(x,y,\tau)\in \mathbb{R}\times \mathbb{R}_+\times (0,t)$, we get contributions only from the planes $\tau=0,x\in\mathbb{R},y\in\mathbb{R}_+;\tau=t,x\in\mathbb{R},y\in\mathbb{R}_+;x\in\mathbb{R},\tau\in[0,t],y=0.$ We find
\begin{equation}\label{altGR}
\begin{split}
  \left(\int_{-\infty}^{\infty}\int_{0}^{\infty}e^{-i\kappa x-iy\lambda}u(x,y,t)dxdy\right)e^{(\kappa+\lambda)^2t}=\int_{-\infty}^{\infty}\int_{0}^{\infty}e^{-i\kappa x-iy\lambda}u(x,y,0)dxdy\\
  -\int_0^t\int_{-\infty}^{\infty}e^{-i\kappa x+(\kappa+\lambda)^2\tau}u(x,0,\tau)d\tau dx-i(\lambda+2\kappa)\int_0^t\int_{-\infty}^{\infty}e^{-i\kappa x+(\kappa+\lambda)^2\tau}u_y(x,0,\tau)d\tau dx,
  \end{split}
\end{equation} which is equation \eqref{sol1}; the last two terms of the right hand side of this equation arise from the terms
$$uv_y-vu_y+uv_x-vu_x=e^{-i\kappa x-i\lambda y+(\kappa+\lambda)^2t}[-i(\lambda+\kappa)u-u_y-u_x],$$ where the last term in the above bracket can be replaced with $-i\kappa u$ using integration by parts.
\end{rem}
Inverting the global relation \eqref{sol1}, we find that $u$ must satisfy
\begin{equation}\label{sol2}
  u(x,y,t)=\frac{1}{(2\pi)^2}\int_{-\infty}^\infty\int_{-\infty}^{\infty}E(\kappa,\lambda;x,y,t) (\widehat{u_0}(\kappa,\lambda)-\tilde{g}(\kappa,\lambda;t))d\lambda d\kappa ,
\end{equation} where
\begin{equation*}
  E(\kappa,\lambda;x,y,t):=\exp(-(\kappa+\lambda)^2t+i\kappa x+i\lambda y)
\end{equation*} and
\begin{equation*}
  \tilde{g}(\kappa,\lambda;t):=\widetilde{g_1}(\kappa,(\kappa+\lambda)^2,t)+i(\lambda+2\kappa) \widetilde{g_0}(\kappa,(\kappa+\lambda)^2,t).
\end{equation*}
Note that only the Dirichlet boundary value is prescribed in the canonical model \eqref{maineq1}-\eqref{init1}. Thus, the $t$-transform $\widetilde{g_1}$ of $g_1$, which appears at the right hand side of \eqref{sol2}, is unknown.

In order to eliminate this unknown function from \eqref{sol2}, we will first deform the contour of integration for the $\lambda$ variable from the real line to a more suitable contour denoted $\partial D_\kappa^+$ in the upper half complex $\lambda$-plane. More precisely, we first define the family of complex domains
\begin{equation*}
  D_\kappa := \{\lambda\in\mathbb{C}\,|\,\Re(\kappa+\lambda)^2< 0\} \text{ (see Figure \ref{dkappa0})},
\end{equation*} parameterized with respect to $\kappa\in \mathbb{R}$. The part of $D_\kappa$ in the upper half complex $\lambda$-plane will be referred to as $D_\kappa^+$, i.e., $D_\kappa^+:=D_\kappa\cap \mathbb{C}_+$.  The definition of $D_\kappa$ is motivated by the fact that in $\mathbb{C}\setminus {D_\kappa}$, the term $\exp(-(\kappa+\lambda)^2(t-\tau))$ is bounded. Therefore, the integrand $E\tilde{g}$ is analytic and decays as $\lambda\rightarrow \infty$ for $\lambda\in \mathbb{C}\setminus D_\kappa^+$.  By the standard complex analytic arguments used in the Fokas method, namely by appealing to Cauchy's theorem and Jordan's lemma, the integral $\int_{-\infty}^\infty E\tilde{g}d\lambda$ can be replaced with $\int_{\partial D_\kappa^+} E\tilde{g}d\lambda$.  Therefore, we can rewrite \eqref{sol2} in the form
\begin{equation}\label{sol3}
\begin{split}
    u(x,y,t)&=\frac{1}{(2\pi)^2}\int_{-\infty}^\infty\int_{-\infty}^{\infty}E(\kappa,\lambda;x,y,t) \widehat{u_0}(\kappa,\lambda)d\lambda d\kappa\\
    & -\frac{1}{(2\pi)^2}\int_{-\infty}^\infty\int_{\partial D_\kappa^+}E(\kappa,\lambda;x,y,t) \tilde{g}(\kappa,\lambda,t)d\lambda d\kappa.
\end{split}
\end{equation}
\begin{figure}
  \centering
  \includegraphics[width=5cm]{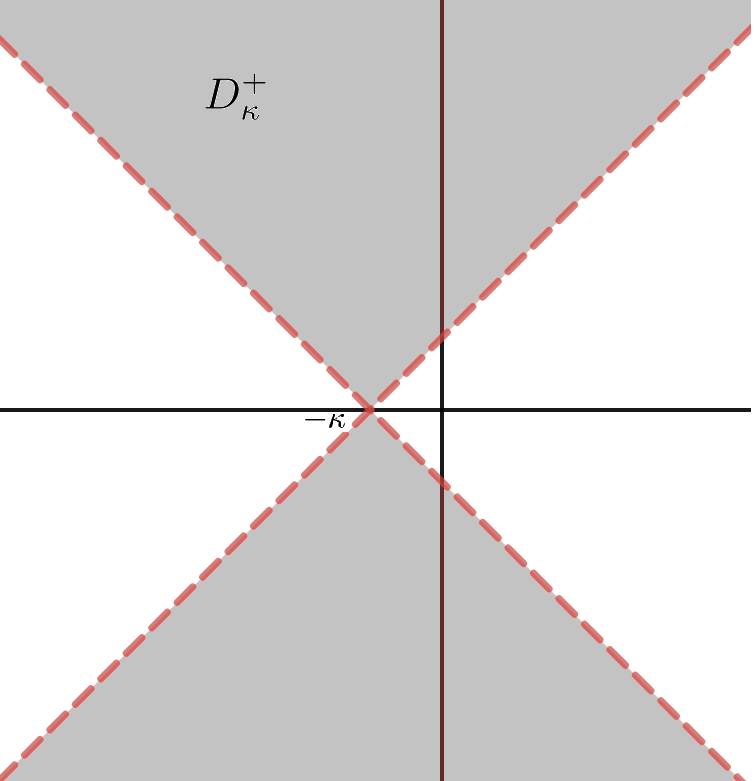}
  \caption{The shaded region shows $D_\kappa$ while the shaded region in the upper complex half plane shows $D_\kappa^+$}\label{dkappa0}
\end{figure}
The orientation of the contour $\partial D_\kappa^+$ is taken in such a way that $D_\kappa^+$ is to the left of $\partial D_\kappa^+$.

The second step is to eliminate the unknown $t$-transform $\tilde{g}_1$ by using the Dirichlet-to-Neumann-like map given below: replacing in the global relation \eqref{sol1} $\lambda$ with $-\lambda-2\kappa,$ we find
\begin{equation}\label{sol4}
\begin{split}
  &\widetilde{g_1}(\kappa,(\kappa+\lambda)^2,t)=-\hat{u}(\kappa,-(\lambda+2\kappa),t)\exp((\kappa+\lambda)^2t)\\
  &+ \widehat{u_0}(\kappa,-(\lambda+2\kappa))+i\lambda \widetilde{g_0}(\kappa,(\kappa+\lambda)^2,t),\quad \kappa\in \mathbb{R}, \Im\lambda\ge 0.
  \end{split}
\end{equation}
Note that \eqref{sol4} is valid in the upper half complex plane because the condition $\Im\lambda\le 0$ in the global relation \eqref{sol1} gives rise to the condition $\Im\lambda\ge 0$ under the mapping $\lambda \mapsto -\lambda-2\kappa$. This mapping is obtained by analysing the invariance properties of the secondary spectral input $(\kappa+\lambda)^2$ of the $t$-transforms of the Fourier transform of boundary values.  Remarkably, Cauchy's theorem and Jordan's lemma imply that the contribution of the term that involves $\hat{u}(\kappa,-(\lambda+2\kappa),t)$ to the integral over $\partial D_\kappa^+$ vanishes. Therefore, one obtains the following solution representation formula, whose right hand side involves only the given initial and boundary values:
\begin{equation}\label{solrep}
\begin{split}
    u(x,y,t)=&\frac{1}{(2\pi)^2}\int_{-\infty}^\infty\int_{-\infty}^{\infty}E(\kappa,\lambda;x,y,t) \widehat{u_0}(\kappa,\lambda)d\lambda d\kappa\\
    & -\frac{1}{(2\pi)^2}\int_{-\infty}^\infty\int_{\partial D_\kappa^+}E(\kappa,\lambda;x,y,t)\widehat{u_0}(\kappa,-(\lambda+2\kappa))d\lambda d\kappa\\
    &-\frac{i}{2\pi^2}\int_{-\infty}^\infty\int_{\partial D_\kappa^+}(\lambda+\kappa)E(\kappa,\lambda;x,y,t) \widetilde{g_0}(\kappa,(\kappa+\lambda)^2,t)d\lambda d\kappa.
\end{split}
\end{equation}
As usual $\widetilde{g_0}(\kappa,(\kappa+\lambda)^2,t)$ in \eqref{solrep} can be replaced with $\widetilde{g_0}(\kappa,(\kappa+\lambda)^2,T)$ because the contribution of $$(\lambda+\kappa)E(\kappa,\lambda;x,y,t) \int_t^T\exp((\kappa+\lambda)^2 \tau)\reallywidehat[x]{g_0}(\kappa,\tau)d\tau$$ to the integral over $\partial D_\kappa^+$ vanishes.

\section{Fourth order heat equation}
In this section, we obtain the representation formula for the solution of the initial boundary value problem for the fourth order heat equation posed on the half space.  In particular, we consider the following canonical model with Dirichlet-Neumann boundary conditions:
 \begin{align}
     \label{maineq2}u_t+\Delta^2 u&=0,\quad (x,y,t)\in\mathbb{R}\times \mathbb{R}_+\times (0,T),\\
     \label{bc2d}u(x,0,t)&=g_0(x,t), \quad (x,t)\in\mathbb{R}\times(0,T),\\
     \label{bc2n}u_y(x,0,t)&=g_1(x,t), \quad (x,t)\in\mathbb{R}\times(0,T),\\
     \label{init2}u(x,y,0)&=u_0(x,y), \quad \quad (x,y)\in\mathbb{R}\times\mathbb{R}_+.
     \end{align}
 In \eqref{maineq2}, the differential operator $\Delta^2$ is given by \begin{equation}\label{defdeltasq}\Delta^2 u = \partial_x^4u+2\partial_x^2\partial_y^2u+\partial_y^4u.\end{equation} Sometimes, we will write $\Delta_{x,y}$ to emphasize the fact that derivatives are taken with respect to variables $x$ and $y$.

We first note that there is no linear change of variables which eliminates the cross term $2u_{xxyy}$ from the definition of $\Delta^2$.
\begin{lem}
  There is no linear change of variables in the form $\tilde{x}=ax+by, \tilde{y}=cx+dy$ with $ad-bc\neq 0$ for which $\Delta_{x,y}^2u=(\partial_{\tilde{x}}^4+\partial_{\tilde{y}}^4)u$.
\end{lem}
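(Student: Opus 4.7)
The plan is to recast the claim as a polynomial identity in formal symbol variables and derive a numerical contradiction by comparing coefficients. Under the change of variables $\tilde x = ax + by$, $\tilde y = cx + dy$, the chain rule gives $\partial_x = a\partial_{\tilde x} + c\partial_{\tilde y}$ and $\partial_y = b\partial_{\tilde x} + d\partial_{\tilde y}$. Substituting into $\Delta_{x,y}^2 = (\partial_x^2 + \partial_y^2)^2$ and requiring that $\Delta_{x,y}^2 u = (\partial_{\tilde x}^4 + \partial_{\tilde y}^4)u$ hold for every smooth $u$ is equivalent, at the level of symbols, to
\begin{equation*}
\bigl((a\tilde\xi + c\tilde\eta)^2 + (b\tilde\xi + d\tilde\eta)^2\bigr)^2 = \tilde\xi^4 + \tilde\eta^4
\end{equation*}
as an identity in $\mathbb{R}[\tilde\xi, \tilde\eta]$.

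Next I would set $A = a^2 + b^2$, $B = ac + bd$, $C = c^2 + d^2$, so that the left-hand side becomes $(A\tilde\xi^2 + 2B\tilde\xi\tilde\eta + C\tilde\eta^2)^2$. Expanding and matching the coefficients of $\tilde\xi^4$, $\tilde\xi^3\tilde\eta$, $\tilde\xi^2\tilde\eta^2$, $\tilde\xi\tilde\eta^3$, and $\tilde\eta^4$ on both sides produces the system $A^2 = 1$, $4AB = 0$, $4B^2 + 2AC = 0$, $4BC = 0$, and $C^2 = 1$. Since $A$ and $C$ are nonnegative, the outer two equations force $A = C = 1$, and either of the linear equations then forces $B = 0$. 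Substituting these values into the middle equation gives $4B^2 + 2AC = 2 \neq 0$, a contradiction. Notably, the non-degeneracy hypothesis $ad - bc \neq 0$ is never invoked; the impossibility holds for every choice of coefficients.

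I do not expect any genuine obstacle, as the proof reduces to a single polynomial expansion. As a conceptual alternative one could note that $\tilde\xi^4 + \tilde\eta^4 = \prod_{k=0}^{3}(\tilde\xi - e^{i(2k+1)\pi/4}\tilde\eta)$ factors over $\mathbb{C}$ into four pairwise distinct linear forms, whereas any polynomial of the shape $Q(\tilde\xi,\tilde\eta)^2$ with $Q$ a binary quadratic form has at most two distinct linear factors, each with even multiplicity. By unique factorization in $\mathbb{C}[\tilde\xi,\tilde\eta]$, such an identity cannot hold. I would present the coefficient-matching computation as the main proof since it is elementary and entirely real, and mention the factorization viewpoint as an illuminating remark.
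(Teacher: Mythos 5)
Your proof is correct and takes essentially the same route as the paper's: expand $(\partial_x^2+\partial_y^2)^2$ in the tilde variables and compare the coefficients of the resulting quartic form, the key point being that $(A\tilde\xi^2+2B\tilde\xi\tilde\eta+C\tilde\eta^2)^2=\tilde\xi^4+\tilde\eta^4$ has no real solution. The only (cosmetic) difference is that the paper uses $ad-bc\neq 0$ to force $ac+bd=0$ from the cubic cross terms, whereas you obtain $B=0$ from $A=C=1>0$, which, as you correctly observe, renders the nondegeneracy hypothesis superfluous.
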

\begin{proof}
  Assume the contrary, namely that a linear change of variables as in the statement of the lemma does exist.  Then, after some calculations it follows that
  \begin{equation*}\label{dercalc}
  \begin{split}
    \Delta_{x,y}^2u=&(a^2+b^2)^2\partial_{\tilde{x}}^4u+4(ac+bd)(a^2+b^2)\partial_{\tilde{x}}^3\partial_{\tilde{y}}u\\
    &+2(3a^2c^2+3b^2d^2+a^2d^2+b^2c^2+4abcd)\partial_{\tilde{x}}^2\partial_{\tilde{y}}^2u\\
    &+4(ac+bd)(c^2+d^2)\partial_{\tilde{x}}\partial_{\tilde{y}}^3u+(c^2+d^2)^2\partial_{\tilde{y}}^4u.\end{split}
  \end{equation*} Since  $ad-bc\neq 0$, we must have $ac=-bd$, but then
  $$3a^2c^2+3b^2d^2+a^2d^2+b^2c^2+4abcd=2b^2d^2+a^2d^2+b^2c^2=0.$$ Hence $ad=bc=0$, which is a contradiction.  Therefore, there is no linear change of variables with nonzero determinant as in the statement of the lemma.
\end{proof}
 Taking the half space Fourier transform of \eqref{maineq2}, after some calculations we obtain
 \begin{equation}\label{FTofMaineq2}
\begin{split}
\hat{u}_t(\kappa,\lambda,t) = -(\kappa^2+\lambda^2)^2\hat{u}(\kappa,\lambda,t)+\reallywidehat[x]{{g}_3}(\kappa,t)+i\lambda\reallywidehat[x]{{g}_2}(\kappa,t)\\
-(2\kappa^2+\lambda^2)\reallywidehat[x]{{g}_1}(\kappa,t)-i\lambda(2\kappa^2+\lambda^2)\reallywidehat[x]{{g}_0}(\kappa,t),\quad \kappa\in \mathbb{R}, \Im\lambda\le 0,
\end{split}
\end{equation} where $g_j(x,t):=\partial_y^ju(x,0,t)$ and $\reallywidehat[x]{{g}_j}$ is the classical Fourier transform of $g_j$ with respect to the $x$ variable (see \eqref{gjhat}). Integrating \eqref{FTofMaineq2} and using the condition $\hat{u}(\kappa,\lambda,0) = \widehat{u_0}(\kappa,\lambda)$,  we find the global relation
\begin{equation}\label{glob4}
\begin{split}
  \hat{u}(\kappa,\lambda,t)\exp((\kappa^2+\lambda^2)^2t)= \widehat{u_0}(\kappa,\lambda)+\widetilde{{g}_3}(\kappa,(\kappa^2+\lambda^2)^2,t)+i\lambda\widetilde{{g}_2}(\kappa,(\kappa^2+\lambda^2)^2,t)\\
-(2\kappa^2+\lambda^2)\widetilde{{g}_1}(\kappa,(\kappa^2+\lambda^2)^2,t)-i\lambda(2\kappa^2+\lambda^2)\widetilde{{g}_0}(\kappa,(\kappa^2+\lambda^2)^2,t),\quad \kappa\in \mathbb{R}, \Im\lambda\le 0.
  \end{split}
\end{equation}
\begin{rem}
In analogy with equation \eqref{altGR}, the global relation \eqref{glob4} can also be obtained by considering the adjoint to \eqref{maineq2}, namely the equation \begin{equation}\label{adjeq2}
           v_t-\Delta^2v=0.
         \end{equation}
Multiplying equation \eqref{maineq2} and \eqref{adjeq2} by $u$ and $v$ respectively, and then adding the resulting equations we find an equation that can be written in the divergence form:
\begin{equation}\label{divform2}
  \begin{split}
     (uv)_t+(vu_{xxx}-uv_{xxx}+vu_{xyy}-uv_{xyy}+u_xv_{xx}-v_xu_{xx}+u_yv_{xy}-v_yu_{xy})_x\\
     +(vu_{yyy}-uv_{yyy}+vu_{yxx}-uv_{yxx}+u_yv_{yy}-v_yu_{yy}+u_xv_{xy}-v_xu_{xy})_y=0.
  \end{split}
\end{equation}
Choosing the particular solution $v=\exp(-i\kappa x-i\lambda y+(\kappa+\lambda)^2t)$ and using integration by parts, the second parenthesis of the above equation gives rise to the term
\begin{equation*}
  \begin{split}
e^{-i\kappa x-i\lambda y+(\kappa+\lambda)^2t}[u_{yyy}-(-i\lambda)^3u+(-i\kappa)^2u_y-(-i\lambda)(-i\kappa)^2u+(-i\lambda)^2u_y\\
-(-i\lambda)u_{yy}-(-i\lambda)(-i\kappa)^2u+(-i\kappa)^2u_y]\\
=e^{-i\kappa x-i\lambda y+(\kappa+\lambda)^2t}[u_{yyy}+i\lambda u_{yy}-(2\kappa^2+\lambda^2)u_y-i\lambda(2\kappa^2+\lambda^2)u].
  \end{split}
\end{equation*}
Hence, following the same steps as those used for the derivation of \eqref{altGR} we find \eqref{FTofMaineq2}.
\end{rem}
Inverting the global relation \eqref{glob4}, we find that $u$ must satisfy
\begin{equation}\label{sol2b}
  u(x,y,t)=\frac{1}{(2\pi)^2}\int_{-\infty}^\infty\int_{-\infty}^{\infty}E(\kappa,\lambda;x,y,t) (\widehat{u_0}(\kappa,\lambda)+\tilde{g}(\kappa,\lambda;t))d\lambda d\kappa ,
\end{equation} where
\begin{equation*}
  E(\kappa,\lambda;x,y,t):=\exp(-(\kappa^2+\lambda^2)^2t+i\kappa x+i\lambda y)
\end{equation*} and
\begin{equation}\label{gtilde}
\begin{split}
 \tilde{g}(\kappa,\lambda;t):=\widetilde{{g}_3}(\kappa,(\kappa^2+\lambda^2)^2,t)+i\lambda\widetilde{{g}_2}(\kappa,(\kappa^2+\lambda^2)^2,t)\\
-(2\kappa^2+\lambda^2)\widetilde{{g}_1}(\kappa,(\kappa^2+\lambda^2)^2,t)-i\lambda(2\kappa^2+\lambda^2)\widetilde{{g}_0}(\kappa,(\kappa^2+\lambda^2)^2,t),
\end{split}
\end{equation} where $\widetilde{g_j}$ is defined in \eqref{ttransform}.   Note that for the canonical problem defined by \eqref{maineq2}-\eqref{init2} the second and third order boundary traces, that is $g_2$ and $g_3$, are unknown functions.  In order to eliminate the $t-$transforms of $g_2$ and $g_3$ from the right hand side of \eqref{sol2b}, we again consider a family of complex regions $D_\kappa$ in the complex $\lambda$-plane parameterized by $\kappa\in \mathbb{R}$. We define
\begin{equation*}
  D_\kappa := \{\lambda\in\mathbb{C}\,|\,\Re(\kappa^2+\lambda^2)^2< 0\}, D_\kappa^+=D_\kappa\cap \mathbb{C}_+.
\end{equation*} The shaded region in Figure \ref{dkappa} is an illustration of $D_\kappa$. The graph of $D_\kappa$ cuts the imaginary axis at the points $\mp i\kappa$.

\begin{figure}
  \centering
  \includegraphics[width=8cm]{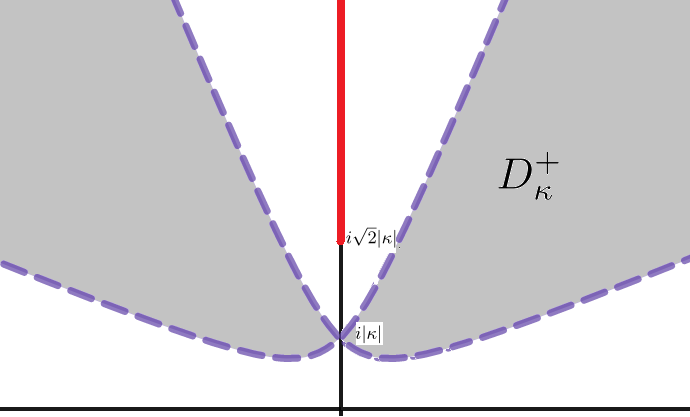}
  \caption{The shaded region shows $D_\kappa^+$ and the red line segment shows the branch cut of $\nu_\kappa$}\label{dkappa}
\end{figure}
Using the same complex analytic arguments as in the previous section, we can rewrite \eqref{sol2b} in the form
\begin{equation}\label{sol3b}
\begin{split}
    u(x,y,t)&=\frac{1}{(2\pi)^2}\int_{-\infty}^\infty\int_{-\infty}^{\infty}E(\kappa,\lambda;x,y,t) \widehat{u_0}(\kappa,\lambda)d\lambda d\kappa\\
    & +\frac{1}{(2\pi)^2}\int_{-\infty}^\infty\int_{\partial D_\kappa^+}E(\kappa,\lambda;x,y,t) \tilde{g}(\kappa,\lambda,t)d\lambda d\kappa.
\end{split}
\end{equation}
In order to implement the next step of the UTM we must find, for given $\kappa\in\mathbb{R}$, nontrivial maps $\lambda\mapsto \nu_\kappa(\lambda)$ which keep the spectral input $(\kappa^2+\lambda^2)^2$ invariant. Thus
\begin{equation}\label{invmap}
  (\kappa^2+\lambda^2)^2=(\kappa^2+\nu_\kappa^2(\lambda))^2:\,\,\,\,\text{(i) } \nu_\kappa(\lambda)=-\lambda \text{ or } \text{(ii) } \nu_\kappa^2(\lambda)=-\lambda^2-2\kappa^2.
\end{equation}  The transformation $\lambda\mapsto -\lambda$ yields the following global relation, which is valid in the upper half complex $\lambda-$plane for any $\kappa\in\mathbb{R}$:
\begin{equation}\label{glob5}
\begin{split}
  \hat{u}(\kappa,-\lambda,t)\exp((\kappa^2+\lambda^2)^2t)= \widehat{u_0}(\kappa,-\lambda)+\widetilde{{g}_3}(\kappa,(\kappa^2+\lambda^2)^2,t)-i\lambda\widetilde{{g}_2}(\kappa,(\kappa^2+\lambda^2)^2,t)\\
-(2\kappa^2+\lambda^2)\widetilde{{g}_1}(\kappa,(\kappa^2+\lambda^2)^2,t)+i\lambda(2\kappa^2+\lambda^2)\widetilde{{g}_0}(\kappa,(\kappa^2+\lambda^2)^2,t),\quad \kappa\in \mathbb{R}, \Im\lambda\ge 0.
  \end{split}
\end{equation} If $\nu_\kappa$ is an invariance map satisfying \eqref{invmap}-(ii), then using \eqref{glob5} we also find the global relation
\begin{equation}\label{glob6}
\begin{split}
  &\hat{u}(\kappa,-\nu_\kappa(\lambda),t)\exp((\kappa^2+\lambda^2)^2t)= \widehat{u_0}(\kappa,-\nu_\kappa(\lambda))+\widetilde{{g}_3}(\kappa,(\kappa^2+\lambda^2)^2,t)\\
  &-i\nu_\kappa(\lambda)\widetilde{{g}_2}(\kappa,(\kappa^2+\lambda^2)^2,t)
+\lambda^2\widetilde{{g}_1}(\kappa,(\kappa^2+\lambda^2)^2,t)-i\lambda^2\nu_\kappa(\lambda)\widetilde{{g}_0}(\kappa,(\kappa^2+\lambda^2)^2,t),\\
&\quad \kappa\in \mathbb{R}, \Im\nu_\kappa(\lambda)\ge 0.
  \end{split}
\end{equation}
For $\Im\lambda\ge 0$ and $\Im\nu_\kappa(\lambda)\ge 0$, both \eqref{glob5} and \eqref{glob6} are valid, and therefore we can use \eqref{glob5} and \eqref{glob6} to eliminate the unknown $t-$transforms in the second integral of the right hand side of \eqref{sol3b}. Solving \eqref{glob5} and \eqref{glob6} for the unknowns $\widetilde{{g}_3}$ and $\widetilde{{g}_2}$, we obtain the following equations, which are valid for $\kappa\in\mathbb{R}$, $\Im\lambda\ge 0$, $\Im\nu_\kappa(\lambda)\ge 0$, and $\nu_\kappa(\lambda)\neq \lambda$:
\begin{equation}\label{g2tilde}
\begin{split}
   \widetilde{{g}_2}(\kappa,(\kappa^2+\lambda^2)^2,t) &  =-\frac{i}{\lambda-\nu_\kappa(\lambda)}\left[\widehat{u_0}(\kappa,-\lambda)-\widehat{u_0}(\kappa,-\nu_\kappa(\lambda))\right] \\
     & +\frac{i}{\lambda-\nu_\kappa(\lambda)}\exp((\kappa^2+\lambda^2)^2t)\left[\hat{u}(\kappa,-\lambda,t)-\hat{u}(\kappa,-\nu_\kappa(\lambda),t)\right]\\
     & +i(\nu_\kappa(\lambda)+\lambda)\widetilde{{g}_1}(\kappa,(\kappa^2+\lambda^2)^2,t)
     +\lambda\nu_\kappa(\lambda)\widetilde{{g}_0}(\kappa,(\kappa^2+\lambda^2)^2,t)
\end{split}
\end{equation} and
\begin{equation}\label{g3tilde}
\begin{split}
   \widetilde{{g}_3}(\kappa,(\kappa^2+\lambda^2)^2,t) &  =-\frac{1}{\nu_\kappa(\lambda)-\lambda}\left[\nu_\kappa(\lambda)\widehat{u_0}(\kappa,-\lambda)-\lambda\widehat{u_0}(\kappa,-\nu_\kappa(\lambda))\right] \\
     & -\frac{1}{\nu_\kappa(\lambda)-\lambda}\exp((\kappa^2+\lambda^2)^2t)\left[\lambda\hat{u}(\kappa,-\nu_\kappa(\lambda),t)-\nu_\kappa(\lambda)\hat{u}(\kappa,-\lambda,t)\right]\\
     & -(\lambda^2+\lambda\nu_\kappa(\lambda)+\nu_\kappa^2(\lambda))\widetilde{{g}_1}(\kappa,(\kappa^2+\lambda^2)^2,t)\\
     & +i\lambda\nu_k(\lambda)(\nu_k+\lambda)\widetilde{{g}_0}(\kappa,(\kappa^2+\lambda^2)^2,t).
\end{split}
\end{equation}
Using \eqref{g2tilde} and \eqref{g3tilde} in \eqref{gtilde}, we find that in the upper  half complex $\lambda-$plane, $\tilde{g}$ takes the form
\begin{equation}\label{gtildenew}
\begin{split}
 \tilde{g}(\kappa,\lambda;t):=-\left[\frac{\nu_\kappa(\lambda)+\lambda}{\nu_\kappa(\lambda)-\lambda}\right]\widehat{u_0}(\kappa,-\lambda)
 +\frac{2\lambda}{\nu_\kappa(\lambda)-\lambda}\widehat{u_0}(\kappa,-\nu_\kappa(\lambda))\\
 +\left[\frac{\nu_\kappa(\lambda)+\lambda}{\nu_\kappa(\lambda)-\lambda}\right]\exp((\kappa^2+\lambda^2)^2t)\hat{u}(\kappa,-\lambda,t)
 -\frac{2\lambda}{\nu_\kappa(\lambda)-\lambda}\exp((\kappa^2+\lambda^2)^2t)\hat{u}(\kappa,-\nu_\kappa(\lambda),t)\\
 -2\lambda[\lambda+\nu_\kappa(\lambda)]\widetilde{{g}_1}(\kappa,(\kappa^2+\lambda^2)^2,t) +2i\lambda\nu_\kappa(\lambda)[\nu_\kappa(\lambda)+\lambda]\widetilde{{g}_0}(\kappa,(\kappa^2+\lambda^2)^2,t).
\end{split}
\end{equation}
Given $\kappa\in \mathbb{R}$, the condition $\nu_\kappa(\lambda)= \lambda$ holds in the upper half complex $\lambda-$plane if $\lambda=i|\kappa|$. However, it is clear from the above form of $\tilde{g}$ that $\lambda=i|\kappa|$ is a removable singularity.

In order that $\nu_\kappa$ has the desired properties needed in the above derivations, it is necessary to choose an appropriate branch of the square root.  This branch is given by $\nu_\kappa(\lambda)=\sqrt{-\lambda^2-2\kappa^2}$, where for $z\in\mathbb{C}$, we define $\displaystyle\sqrt{z}=|z|^{\frac{1}{2}}e^{i\frac{\arg z}{2}}$ with  $\arg z\in [0,2\pi)$.  This function is analytic in the complement of the set $S_{\kappa}=\{\lambda\in \mathbb{C}\,|\,\Re \lambda= 0,|\Im \lambda|\ge \sqrt{2}|\kappa|\}$, hence $\nu_\kappa$ is indeed analytic since the set $S_{\kappa}$ is  outside and strictly away from $\overline{D_\kappa^+}$ (see Figure \ref{dkappa}).  Therefore, the contribution of the terms involving $\hat{u}(\kappa,-\lambda,t)$ and $\hat{u}(\kappa,-\nu_\kappa(\lambda),t)$ to the integral \eqref{sol3b} vanish on $\partial D_\kappa^+$.  Thus, the solution of \eqref{maineq2}-\eqref{init2} can be represented in the form
\begin{equation}\label{sol3bnew}
\begin{split}
    u(x,y,t)&=\frac{1}{(2\pi)^2}\int_{-\infty}^\infty\int_{-\infty}^{\infty}E(\kappa,\lambda;x,y,t) \widehat{u_0}(\kappa,\lambda)d\lambda d\kappa\\
    & +\frac{1}{(2\pi)^2}\int_{-\infty}^\infty\int_{\partial D_\kappa^+}E(\kappa,\lambda;x,y,t) G(\kappa,\lambda,t)d\lambda d\kappa,
\end{split}
\end{equation}
where
\begin{equation}\label{Gdef}
\begin{split}
 G(\kappa,\lambda;t):=-\left[\frac{\nu_\kappa(\lambda)+\lambda}{\nu_\kappa(\lambda)-\lambda}\right]\widehat{u_0}(\kappa,-\lambda)
 +\frac{2\lambda}{\nu_\kappa(\lambda)-\lambda}\widehat{u_0}(\kappa,-\nu_\kappa(\lambda))\\
 -2\lambda[\lambda+\nu_\kappa(\lambda)]\widetilde{{g}_1}(\kappa,(\kappa^2+\lambda^2)^2,t) +2i\lambda\nu_\kappa(\lambda)[\nu_\kappa(\lambda)+\lambda]\widetilde{{g}_0}(\kappa,(\kappa^2+\lambda^2)^2,t)
\end{split}
\end{equation} and $\nu_\kappa(\lambda)=\sqrt{-\lambda^2-2\kappa^2}$.

\section{Fourth order Schrödinger equation}
We consider the fourth order Schrödinger equation:
 \begin{align}
     \label{maineq3}iu_t+\Delta^2 u&=0,\quad (x,y,t)\in\mathbb{R}\times \mathbb{R}_+\times (0,T),\\
     \label{bc3d}u(x,0,t)&=g_0(x,t), \quad (x,t)\in\mathbb{R}\times(0,T),\\
     \label{bc3n}u_y(x,0,t)&=g_1(x,t), \quad (x,t)\in\mathbb{R}\times(0,T),\\
     \label{init3}u(x,y,0)&=u_0(x,y), \quad \quad (x,y)\in\mathbb{R}\times\mathbb{R}_+,
     \end{align} where $u$ is a complex valued function and $\Delta^2$ is as in \eqref{defdeltasq}.
 \subsection{Global relations}
 We take the half space Fourier transform of \eqref{maineq3} and obtain the global relation
\begin{equation}\label{GR}
\begin{split}
\hat{u}(\kappa,\lambda,t)\exp({-i(\kappa^2+\lambda^2)^2t})=\widehat{u_0}(\kappa,\lambda)+\tilde{g}(\kappa,\lambda;t),\kappa\in\mathbb{R}, \Im\lambda\le 0,
\end{split}
\end{equation} where
\begin{equation*}\label{gtilde3}
\begin{split}
 \tilde{g}(\kappa,\lambda;t):=-i\tilde{g}_3(\kappa,-i(\kappa^2+\lambda^2)^2,t)+\lambda\tilde{g}_2(\kappa,-i(\kappa^2+\lambda^2)^2,t)\\
+i(\lambda^2+2\kappa^2)\tilde{g}_1(\kappa,-i(\kappa^2+\lambda^2)^2,t)-\lambda(\lambda^2+2\kappa^2)\tilde{g}_0(\kappa,-i(\kappa^2+\lambda^2)^2,t).
\end{split}
\end{equation*}
\begin{rem}
An alternative derivation is based on the function $$v=\exp(-i\kappa x-i\lambda y-i(\kappa^2+\lambda^2)^2t),$$ which is particular solution of the adjoint equation $$iv_t-\Delta^2v=0.$$
\end{rem}
We define
\begin{equation*}
  D_\kappa := \{\lambda\in\mathbb{C}\,|\,\Re[i(\kappa^2+\lambda^2)^2]> 0\}, D_\kappa^+=D_\kappa\cap \mathbb{C}_+.
\end{equation*} The shaded region in Figure \ref{graph} is an illustration of $D_\kappa^+$. The graph of $D_\kappa^+$ cuts the imaginary axis at the point $i|\kappa|$.
\begin{figure}
  \centering
  \includegraphics[width=8cm]{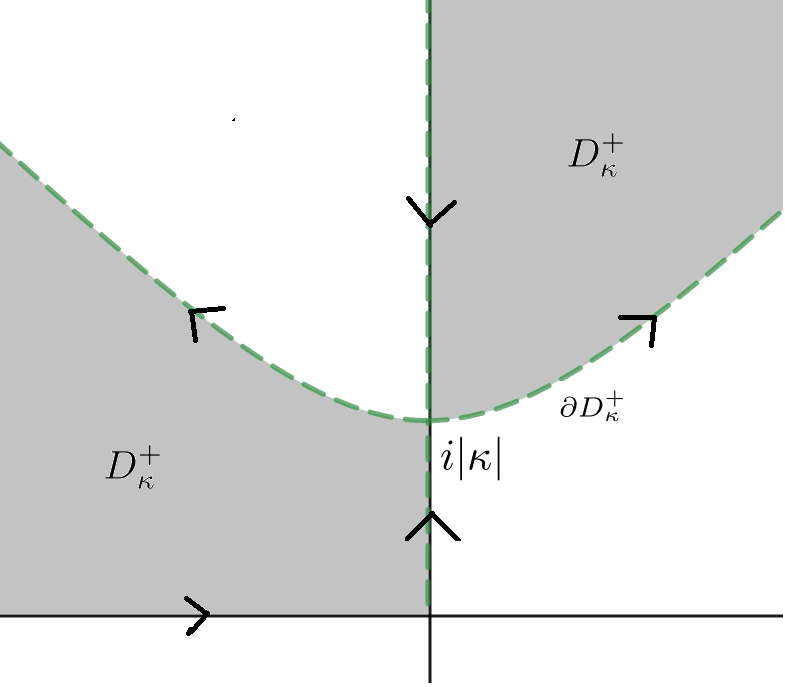}
  \caption{The shaded region shows $D_\kappa^+$}\label{graph}
\end{figure}
Taking the inverse Fourier transform  in \eqref{GR} and deforming the contour of $\lambda$ from the real line to the boundary of $D_\kappa^+$, we obtain
\begin{equation}\label{int2}
\begin{split}
  u(x,y,t)=\frac{1}{(2\pi)^2}\int_{-\infty}^\infty\int_{-\infty}^\infty E(\kappa,\lambda;x,y,t)\widehat{u_0}(\kappa,\lambda)d\lambda d\kappa\\
  \frac{1}{(2\pi)^2}\int_{-\infty}^\infty\int_{\partial D_\kappa^+}E(\kappa,\lambda;x,y,t) \tilde{g}(\kappa,\lambda;t)d\lambda d\kappa ,
  \end{split}
\end{equation} where
\begin{equation*}
  E(\kappa,\lambda;x,y,t):=\exp(i(\kappa^2+\lambda^2)^2t+i\kappa x+i\lambda y).
\end{equation*}
Replacing $\lambda$ with $-\lambda$ in \eqref{GR}, we find a global relation which is valid for $\Im \lambda\ge 0$:
\begin{equation}\label{GR2}
\begin{split}
&\hat{u}(\kappa,-\lambda,t)\exp({-i(\kappa^2+\lambda^2)^2t})=\widehat{u_0}(\kappa,-\lambda)\\
&-i\tilde{g}_3(\kappa,-i(\kappa^2+\lambda^2)^2,t)-\lambda\tilde{g}_2(\kappa,-i(\kappa^2+\lambda^2)^2,t)\\
&+i(\lambda^2+2\kappa^2)\tilde{g}_1(\kappa,-i(\kappa^2+\lambda^2)^2,t)+\lambda(\lambda^2+2\kappa^2)\tilde{g}_0(\kappa,-i(\kappa^2+\lambda^2)^2,t).
\end{split}
\end{equation} Furthermore, if we change $\lambda$ in \eqref{GR2} with $\nu_\kappa$ satisfying
\begin{equation}\label{nuu}
\nu_\kappa^2=-\lambda^2-2\kappa^2,
\end{equation} we obtain a new global relation which is valid for $\Im\lambda\ge 0$ and $\Im \nu_\kappa\ge 0$:
\begin{equation}\label{GR3}
\begin{split}
&\hat{u}(\kappa,-\nu_\kappa,t)\exp({-i(\kappa^2+\lambda^2)^2t})=\widehat{u_0}(\kappa,-\nu_\kappa)\\
&-i\tilde{g}_3(\kappa,-i(\kappa^2+\lambda^2)^2,t)-\nu_\kappa\tilde{g}_2(\kappa,-i(\kappa^2+\lambda^2)^2,t)\\
&-i\lambda^2\tilde{g}_1(\kappa,-i(\kappa^2+\lambda^2)^2,t)-\nu_\kappa\lambda^2\tilde{g}_0(\kappa,-i(\kappa^2+\lambda^2)^2,t).
\end{split}
\end{equation} Solving \eqref{GR2} and \eqref{GR3} for $\tilde{g}_2$ and $\tilde{g}_3$, we find
\begin{equation}\begin{split}
&\tilde{g}_2(\kappa,-i(\kappa^2+\lambda^2)^2,t)=\lambda \nu_\kappa \tilde{g}_0(\kappa,-i(\kappa^2+\lambda^2)^2,t)+i (\lambda+\nu_\kappa)\tilde{g}_1(\kappa,-i(\kappa^2+\lambda^2)^2,t)\\
&+\frac{\widehat{u_0}(\kappa,-\nu_\kappa)-\widehat{u_0}(\kappa,-\lambda)}{\nu_\kappa-\lambda}+\frac{\widehat{u}(\kappa,-\lambda,t)-\widehat{u}(\kappa,-\nu_\kappa,t)}{\nu_\kappa-\lambda}\exp({-i(\kappa^2+\lambda^2)^2t})
\end{split}
\end{equation} and
\begin{equation}
\begin{split}
&\tilde{g}_3(\kappa,-i(\kappa^2+\lambda^2)^2,t)=i\lambda\nu_\kappa(\lambda+\nu_\kappa)\tilde{g}_0(\kappa,-i(\kappa^2+\lambda^2)^2,t)\\
&-(\nu_\kappa^2+\nu_\kappa\lambda+\lambda^2)\tilde{g}_1(\kappa,-i(\kappa^2+\lambda^2)^2,t)+\frac{i\lambda \widehat{u_0}(\kappa,-\nu_\kappa)-i\nu_\kappa \widehat{u_0}(\kappa,-\lambda,t)}{\nu_\kappa-\lambda}\\
&+\frac{i\nu_\kappa \hat{u}(\kappa,-\lambda,t)-i\lambda \hat{u}(\kappa,-\nu_\kappa,t)}{\nu_\kappa-\lambda}\exp({-i(\kappa^2+\lambda^2)^2t}).
\end{split}
\end{equation} Using the above identities in \eqref{int2}, we obtain
\begin{equation}
\begin{split}\label{abcd}
u(x,y,t)=&\frac{1}{(2\pi)^2}\int_{-\infty}^\infty\int_{-\infty}^{\infty}E(\kappa,\lambda;x,y,t) \widehat{u_0}(\kappa,\lambda)d\lambda d\kappa\\
&+\frac{1}{(2\pi)^2}\int_{-\infty}^\infty\int_{\partial D_\kappa^+}E(\kappa,\lambda;x,y,t)G(\kappa,\lambda,t)d\lambda d\kappa\\
&+\frac{1}{(2\pi)^2}\int_{-\infty}^\infty\exp({i\kappa x})\int_{\partial D_\kappa^+}H(\lambda;\kappa,y,t)d\lambda d\kappa,
\end{split}
\end{equation} where
\begin{equation*}
\begin{split}
G(\kappa,\lambda,t):=\left[\frac{2\lambda}{\nu_\kappa-\lambda}\widehat{u_0}(\kappa,-\nu_\kappa)-\frac{\lambda+\nu_\kappa}{\nu_\kappa-\lambda}\widehat{u_0}(\kappa,-\lambda)\right]\\
+[2(\lambda^2\nu_\kappa+\lambda\nu_\kappa^2)\tilde{g}_0+2i(\lambda^2+\lambda\nu_\kappa)\tilde{g}_1]
\end{split}
\end{equation*} and
$$H(\lambda;\kappa,y,t):= \left[\frac{\lambda+\nu_\kappa}{\nu_\kappa-\lambda}\hat{u}(\kappa,-\lambda,t)-\frac{2\lambda}{\nu_\kappa-\lambda}\hat{u}(\kappa,-\nu_\kappa,t)\right]\exp({i\lambda y}). $$
\subsection{{Analyticity issues}}The point $\lambda = i|\kappa|$ is a removable singularity, thus if $\nu_\kappa$ was analytic function then $H(\lambda;\kappa,y,t)$ would be analytic in $D_\kappa^+$, and its contribution to the integral on $\partial D_\kappa^+$ would be zero. However, it is not clear whether there exists a map $\lambda\mapsto \nu_\kappa(\lambda)$ which both satisfies the invariance property \eqref{nuu} and the property (P) given below.

\vspace{3mm}
 (P)\,\, $\nu_{\kappa}$ is analytic on a finite family of open sets, each of which contains one connected component of $D_\kappa^+$.
\vspace{3mm}

We prove below in Lemma \ref{nonana} that such a map does not exist for our problem.
\begin{lem}\label{nonana}
  A function which satisfies the invariance property \eqref{nuu} can not be analytic in the neighbourhood of $\lambda=\pm i\sqrt{2}|\kappa|$ for $\kappa\neq 0$.
\end{lem}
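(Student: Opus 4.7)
The plan is to argue by contradiction, exploiting the order of vanishing of the two sides of the defining identity at the branch points. Suppose, toward a contradiction, that some $\nu_\kappa$ satisfying the invariance property $\nu_\kappa(\lambda)^2 = -\lambda^2 - 2\kappa^2$ is analytic in some disk centered at one of the points $\lambda_0 = \pm i\sqrt{2}|\kappa|$, where $\kappa \neq 0$. The first step is to observe that at $\lambda_0$ the right hand side $-\lambda^2 - 2\kappa^2$ vanishes, since $-\lambda_0^2 = 2\kappa^2$; consequently $\nu_\kappa(\lambda_0)^2 = 0$, so $\nu_\kappa(\lambda_0) = 0$.

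The next step is to compare the orders of the zero on each side. On the right, the derivative of $-\lambda^2 - 2\kappa^2$ is $-2\lambda$, which equals $\mp 2i\sqrt{2}|\kappa| \neq 0$ at $\lambda_0$; hence $-\lambda^2 - 2\kappa^2$ has a simple zero (order exactly $1$) at $\lambda_0$. On the left, since $\nu_\kappa$ is assumed analytic at $\lambda_0$ and $\nu_\kappa(\lambda_0)=0$, we can write a Taylor expansion $\nu_\kappa(\lambda) = c_m(\lambda-\lambda_0)^m + O((\lambda-\lambda_0)^{m+1})$ with $m\ge 1$ and $c_m\neq 0$ (unless $\nu_\kappa\equiv 0$, which is incompatible with the identity for $\kappa\neq 0$). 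Squaring gives
\[
\nu_\kappa(\lambda)^2 = c_m^{\,2}(\lambda-\lambda_0)^{2m} + O((\lambda-\lambda_0)^{2m+1}),
\]
so the order of vanishing of $\nu_\kappa^2$ at $\lambda_0$ is $2m$, which is even and at least $2$.

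Equating the two orders of vanishing yields $2m = 1$, which is impossible for an integer $m \geq 1$. This contradiction shows that no such analytic $\nu_\kappa$ can exist in any neighborhood of $\lambda_0 = \pm i\sqrt{2}|\kappa|$ when $\kappa \neq 0$, establishing the lemma. The argument is essentially routine once the orders of the zeros are identified; the only subtlety is the observation that the quadratic nature of the invariance identity forces the order of vanishing of $\nu_\kappa^2$ to be even, while the explicit right hand side has a simple zero — a parity mismatch which is the usual obstruction to having a globally analytic branch of a square root near a branch point.
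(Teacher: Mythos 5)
Your proof is correct, but it follows a genuinely different route from the paper's. The paper argues globally: it places a small ball around a nearby point of the branch cut $S_\kappa^+$, observes that any continuous square root there must switch between the two branches $\pm\nu^1_\kappa$ across the cut, and then runs an analytic continuation along a path circling back within the assumed neighbourhood of $i\sqrt{2}|\kappa|$ to obtain a monodromy contradiction ($\nu_\kappa$ would have to equal both $\nu^1_\kappa$ and $-\nu^1_\kappa$ at the same point). Your argument is purely local: since $g(\lambda)=-\lambda^2-2\kappa^2$ has a \emph{simple} zero at $\lambda_0=\pm i\sqrt{2}|\kappa|$ (its derivative $-2\lambda_0\neq 0$ because $\kappa\neq 0$), while the square of any analytic function vanishing at $\lambda_0$ vanishes to even order $2m\ge 2$ (the case $\nu_\kappa\equiv 0$ being excluded since $g\not\equiv 0$), the identity $\nu_\kappa^2=g$ forces the parity contradiction $2m=1$. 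Your version is shorter, avoids introducing the set $S_\kappa^+$ and the path construction altogether, and isolates exactly why these two points are branch points (namely, that they are simple zeros of the radicand); the paper's version, while heavier, ties in directly with the branch-cut geometry that is used in the remainder of Section 4. Both are valid proofs of the lemma.
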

\begin{proof}
  Assume that $\kappa \neq 0$. We give the proof only for $\lambda=i\sqrt{2}|\kappa|$; $\lambda=-i\sqrt{2}|\kappa|$ can be treated analogously. Let $S_\kappa^+$ be the intersection of $S_\kappa$, whose definition is given in Section 3, with the upper half complex $\lambda-$plane. Suppose that there is a function $\lambda\mapsto \nu_\kappa(\lambda)$ which is analytic in a neighborhood of $i\sqrt{2}|\kappa|$ denoted by $B_{\tilde{\epsilon}}(i\sqrt{2}|\kappa|)$ which also satisfies the invariance property \eqref{nuu}.  Let $\lambda_0$ be such that $S_\kappa^+\cap B_{\tilde{\epsilon}}(i\sqrt{2}|\kappa|)\ni \lambda_0\neq i\sqrt{2}|\kappa|$. Let $B_\epsilon(\lambda_0)$ be an open ball of radius $\epsilon$ centered at $\lambda_0$, and further $\epsilon$ is sufficiently small so that $i\sqrt{2}|\kappa|\notin B_\epsilon(\lambda_0)$. Consider the restriction map ${\nu_\kappa}|_{B_\epsilon(\lambda_0)}$. This function must be either of the form
  \[
	{\nu_\kappa}_{|_{B_\epsilon(\lambda_0)}} =
	\begin{cases}
	\nu^1_\kappa(\lambda) & \text{ $\Re\lambda>0$} \\
	-\nu^1_\kappa(\lambda) & \text{$\Re\lambda\leq 0$}
	\end{cases}
	\] or its negative, where $\nu^1_\kappa(\lambda)=\sqrt{-\lambda^2-2\kappa^2}$. We take a path in $B_{\tilde{\epsilon}}(i\sqrt{2}|\kappa|)$ starting from a point $A$ at the left side of $B_\epsilon(\lambda_0)$ and arriving at a point $B$ at the right side of  $B_\epsilon(\lambda_0)$ while avoiding $S^+_\kappa$ (see Figure \ref{yolY}). Then we apply analytic continuation to ${\nu_\kappa}_{|_{B_\epsilon(\lambda_0)}}$ starting from $A$ through the path. This would imply that $\nu_\kappa$ is equal to both $\nu^1_\kappa(\lambda)$ and $-\nu^1_\kappa(\lambda)$ at the point $B$, which is a contradiction.
\begin{figure}[h]
	\centering
	\includegraphics[width=6cm]{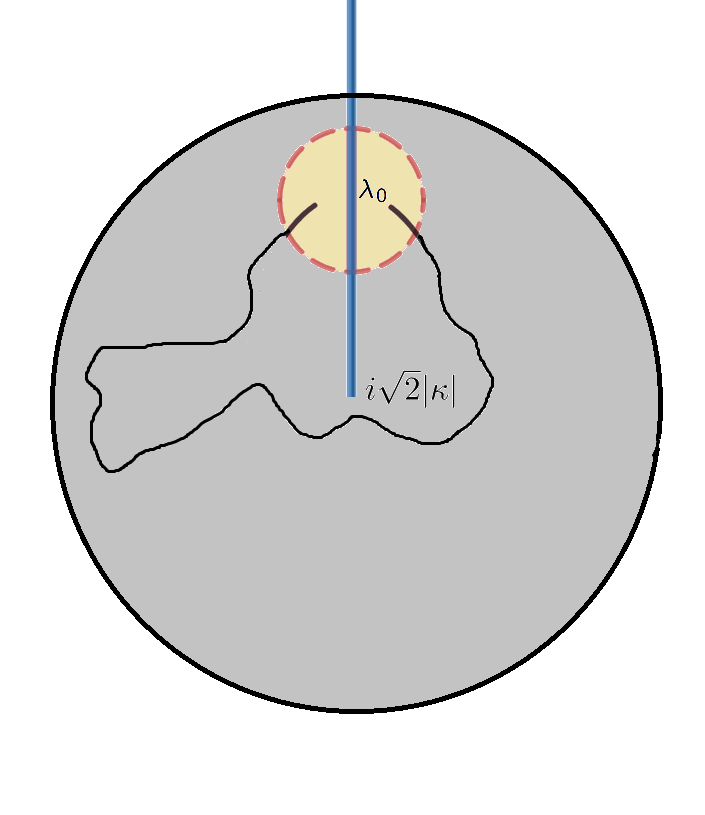}
	\caption{The (black) path is used for the analytic continuation argument in the proof of Lemma \ref{nonana}}\label{yolY}
\end{figure}
\end{proof}

Lemma \ref{nonana} shows that $\pm i\sqrt{2}|\kappa|$ are two branch points of any function $\nu_{\kappa}$ which satisfies \eqref{nuu}. Therefore, there is no map $\nu_{\kappa}$ which satisfies both \eqref{nuu} and the property (P). However, property (P) is not a necessary condition to say that the contribution of $H$ to the integral representation \eqref{abcd} is zero. Indeed, it would be enough that $\nu_{\kappa}$ were analytic on $\overline{D_\kappa
^+}\setminus\{i\sqrt{2}|\kappa|\}$ (see \eqref{inttH}). Fortunately, we can construct a single valued function $\nu_{\kappa}$ which is analytic on $\overline{D_\kappa}\setminus\{\pm i\sqrt{2}|\kappa|\}$ by taking an arbitrary branch cut consisting of disjoint curves starting from branch points $i\sqrt{2}|\kappa|$ and $-i\sqrt{2}|\kappa|$, and extending to infinity avoiding $\overline{D_\kappa}$ (except at the branch points). In order to be able to compute $\nu_{\kappa}(\lambda)$ for given $\lambda$ and make sure that it also satisfies $\Im \nu_{\kappa} \geq 0$ on $D_\kappa^+$, we will fix a suitable branch cut for it in the following way.

Let $s$ denote a (branch of the) square root function, $g(\lambda)=-\lambda^2-2\kappa^2$, and $\nu_\kappa=s\circ g$. Denoting the set of elements of the branch cut of a function $f$ by $W_f$, it is clear that $$W_{\nu_{\kappa}}=g^{-1}(W_s)= \sqrt{-W_s-2\kappa^2}\cup -\sqrt{-W_s-2\kappa^2},$$ where $\sqrt{\;\;}$ is the standard square root function. Then
\begin{equation}
\label{branch}
W^+_{\nu_{\kappa}}:=W_{\nu_{\kappa}}\cap \mathbb{C}^+=\sqrt{-W_s-2\kappa^2}.
\end{equation}
Note that if  $s$ is the standard square root function $\sqrt{\;\;}$, then $$W_s=\{z\in\mathbb{C}\;|\;\Re z\geq 0,\,\Im z= 0\},$$ and
$\nu_\kappa(\lambda)=\sqrt{-\lambda^2-2\kappa^2}$ satisfies the condition $\Im \nu_\kappa(\lambda)\ge 0$. However from \eqref{branch} we see that with this choice of $s$, upper part $W^+_{\nu_{\kappa}}$ of the corresponding branch cut, where $\nu_\kappa$ is not analytic, equals $S_{\kappa}^+$, which is a subset of $\partial D_\kappa^+$. Therefore, the standard square root is not the right branch to use for eliminating the integral of $H$ in \eqref{abcd}.

In order to deal with this issue we rotate the branch cut of the standard square root function a little bit. Namely, we fix a square root function $s$ as the map $z\mapsto \sqrt{z}^*$, where $\displaystyle \sqrt{z}^*:=|z|^{\frac{1}{2}}e^{i\frac{\arg z}{2}}$ with $\arg z\in [\epsilon,2\pi+\epsilon)$ for some fixed and sufficiently small $\epsilon>0$. Note that, now we have $$W_{s}=\{z\in\mathbb{C}\;|\;\Re z\geq 0,\,\Im z= \tan(\epsilon)\Re z\}$$ and from \eqref{branch} it is easy to see that $W^+_{\nu_{\kappa}}$ stays in the second quadrant. Moreover, as we show in the following lemma, for $\epsilon>0$ small enough, it is certain that $W_{\nu_\kappa}^+$ stays away from all  $\lambda\in\overline{D_\kappa^+}\setminus\{i\sqrt{2}\kappa\}$ and $\Im \nu_\kappa(\lambda)\ge 0$ for $\lambda\in D_\kappa^+,$ (see Figure \ref{stayaway}).
\begin{lem}\label{discont}
  For $\epsilon>0$ small enough, the function $\nu_\kappa(\lambda):=\sqrt{-\lambda^2-2\kappa^2}^*$ is analytic on $\overline{D_\kappa}\setminus\{\pm i\sqrt{2}|\kappa|\}$. Moreover, $\Im \nu_\kappa(\lambda)\ge 0$ for $\lambda\in D_\kappa.$
\end{lem}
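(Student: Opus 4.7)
The plan is to exploit the change of variables $g(\lambda) := -\lambda^2 - 2\kappa^2$, which makes both the shape of $D_\kappa$ and the branch structure of $\nu_\kappa = s\circ g$ simultaneously transparent. I would first record the identity $\kappa^2 + \lambda^2 = -(g + \kappa^2)$ and expand with $g = u + iv$ to obtain
\begin{equation*}
\Re\bigl[i(\kappa^2+\lambda^2)^2\bigr] = -2(u+\kappa^2)v,
\end{equation*}
so that $\overline{D_\kappa}$ is exactly the closed set $\{(u+\kappa^2)v \leq 0\}$ in the $\lambda$-plane.

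Next I would parametrize the branch cut $W_{\nu_\kappa} = g^{-1}(W_s)$. Writing $\lambda = a + ib$ and equating $-\lambda^2 - 2\kappa^2 = re^{i\epsilon}$ with $r \geq 0$ gives
\begin{equation*}
b^2 - a^2 = 2\kappa^2 + r\cos\epsilon, \qquad -2ab = r\sin\epsilon.
\end{equation*}
For $\epsilon \in (0,\pi)$ and $r > 0$ the second equation forces $a$ and $b$ to have opposite signs, so $W^+_{\nu_\kappa}$ is a curve emanating from $i\sqrt{2}|\kappa|$ into the open second quadrant and $W^-_{\nu_\kappa}$ is its reflection in the fourth quadrant. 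Along $W_{\nu_\kappa}$ one has $u = r\cos\epsilon$ and $v = r\sin\epsilon$, and restricting to $\epsilon \in (0,\pi/2)$ makes both quantities nonnegative, so
\begin{equation*}
(u+\kappa^2)v = (r\cos\epsilon + \kappa^2)(r\sin\epsilon) \geq 0,
\end{equation*}
with equality iff $r = 0$. Therefore $W_{\nu_\kappa} \cap \overline{D_\kappa} = \{\pm i\sqrt{2}|\kappa|\}$; since $\nu_\kappa$ is analytic on the open complement $\mathbb{C}\setminus W_{\nu_\kappa}$, this delivers analyticity on $\overline{D_\kappa}\setminus\{\pm i\sqrt{2}|\kappa|\}$.

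For the sign of the imaginary part I would use $\nu_\kappa = |g|^{1/2}e^{i\arg(g)/2}$ with $\arg g \in [\epsilon, 2\pi + \epsilon)$, so $\arg\nu_\kappa \in [\epsilon/2, \pi + \epsilon/2)$. Hence $\Im\nu_\kappa(\lambda) < 0$ can only occur when $\arg\nu_\kappa \in (\pi, \pi + \epsilon/2)$, equivalently $\arg g \in (2\pi, 2\pi + \epsilon)$. In that thin $g$-sector both $u > 0$ and $v > 0$, so $(u+\kappa^2)v > 0$, which by the reformulation above means $\lambda \notin D_\kappa$. The contrapositive gives $\Im\nu_\kappa(\lambda) \geq 0$ on $D_\kappa$.

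I do not foresee a serious obstacle: the entire nontrivial content is absorbed into the single identity $\Re[i(\kappa^2+\lambda^2)^2] = -2(u+\kappa^2)v$, after which both halves of the lemma collapse to sign tracking in the $g$-plane. The only smallness requirement is $\epsilon \in (0,\pi/2)$, which is precisely what is needed for $\cos\epsilon$ and $\sin\epsilon$ to both be positive in the two displayed inequalities above.
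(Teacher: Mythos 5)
Your proof is correct, and it closes the argument by a more self-contained route than the paper's. Both proofs start from the same place --- writing the cut condition $-\lambda^2-2\kappa^2=re^{i\epsilon}$ and splitting into real and imaginary parts --- but they diverge afterwards. The paper bounds the slope $\alpha_\lambda=\Re\lambda/\Im\lambda$ of branch-cut points, concluding that $W_{\nu_\kappa}^+$ lies in a thin wedge just left of the imaginary axis, and then \emph{appeals to the geometry of $D_\kappa$} (essentially to Figure 5) to assert that this wedge is strictly away from $\overline{D_\kappa}\setminus\{\pm i\sqrt{2}|\kappa|\}$; this is where the hypothesis ``$\epsilon$ small enough'' enters. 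You instead pull everything back to the $g$-plane via the identity $\Re\bigl[i(\kappa^2+\lambda^2)^2\bigr]=-2(u+\kappa^2)v$, so that membership in $\overline{D_\kappa}$ becomes the sign condition $(u+\kappa^2)v\le 0$, and on the cut $u=r\cos\epsilon$, $v=r\sin\epsilon$ makes this product strictly positive for $r>0$. The same identity disposes of the second claim: $\Im\nu_\kappa<0$ forces $\arg g\in(2\pi,2\pi+\epsilon)$, hence $u,v>0$, hence $\lambda\notin D_\kappa$. This buys two things: no geometric separation claim is needed, and the conclusion holds for every $\epsilon\in(0,\pi/2)$ rather than only for $\epsilon$ sufficiently small --- a genuine sharpening. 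Two cosmetic remarks: your assertion that $\overline{D_\kappa}$ equals $\{(u+\kappa^2)v\le 0\}$ is slightly stronger than what is automatic (only the inclusion $\overline{D_\kappa}\subseteq\{(u+\kappa^2)v\le 0\}$ follows from continuity, and that is all you use, so state only the inclusion); and you should note explicitly that the case $r=0$ corresponds to $g=0$, i.e.\ $\lambda=\pm i\sqrt{2}|\kappa|$, which are precisely the excluded points.
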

\begin{proof}
  Let $W_{\nu_\kappa}$ be the set of points in the $\lambda-$plane where $\nu_\kappa$ is discontinuous. These are the points which satisfy $-\lambda^2-2\kappa^2=r_\lambda e^{i\epsilon}$ for some $r_\lambda\ge 0$.  For such points, we have
  \begin{eqnarray}
    \Re\lambda^2-\Im\lambda^2 &=& -r_\lambda\cos\epsilon-2\kappa^2, \\
    2\Re\lambda\Im\lambda &=& -r_\lambda\sin\epsilon.\label{alphasign}
  \end{eqnarray}
  Suppose $W_{\nu_\kappa}\ni \lambda\neq \pm i\sqrt{2}|\kappa|$, i.e., $r_\lambda\neq 0$. Set $\displaystyle\alpha_\lambda:=\frac{\Re \lambda}{\Im \lambda}$ and $\displaystyle\beta_{\epsilon,\lambda}:=2\cot(\epsilon)+\frac{4\kappa^2}{r_\lambda\sin{\epsilon}}$. Note that $\alpha_\lambda<0$ by \eqref{alphasign} and $\displaystyle\alpha_\lambda-\frac{1}{\alpha_\lambda}=\beta_{\epsilon,\lambda}>0.$  This implies $\alpha_\lambda>-\frac{1}{\beta_{\epsilon,\lambda}}>-\frac{\tan(\epsilon)}{2}$.  Therefore, we can make $|\alpha_\lambda|$ very small by taking $\epsilon>0$ small enough so that elements of $W_{\nu_\kappa}$ are strictly away from $\lambda\in\overline{D_\kappa}\setminus\{\pm i\sqrt{2}|\kappa|\}$.

  Moreover $\Im \sqrt{z}^* < 0$ only if $2\pi<\arg z <2\pi+\epsilon$. This translates into the region between $W_{\nu_\kappa}$ and $S_\kappa$ for $\nu_\kappa$ in complex $\lambda$-plane . Therefore the last statement of the lemma follows.
\end{proof}
\begin{figure}[h]
	\centering
	\includegraphics[width=8cm]{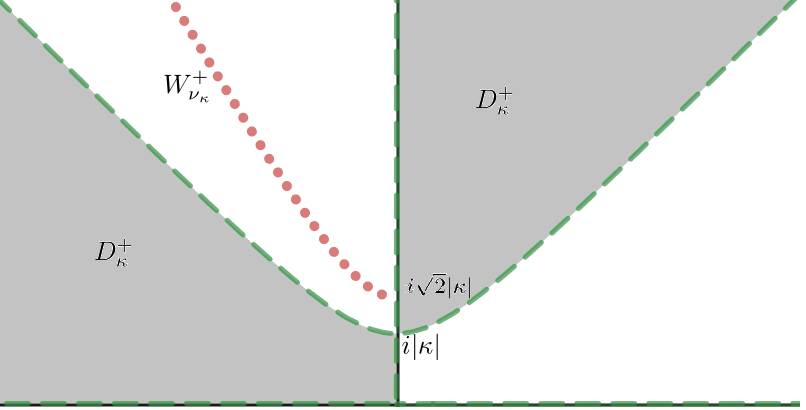}
	\caption{The shaded region shows $D_\kappa^+$ and the dashed red curve shows the branch cut}\label{stayaway}
\end{figure}
The next step in view of Lemma \ref{discont} is to take care of the branch point $\lambda=i\sqrt{2}|\kappa|$.  This can be achieved by deforming the contour to a small half circle around this point. More precisely, given $r>0$, let $B_{r}$ be the half disk defined by $$B_r\equiv\{\lambda\in\mathbb{C} \;|\; |\lambda-i\sqrt{2}\kappa|<r, \;\Re\lambda>0\}.$$
Then, the integral $\int_{\partial D^+}Hd\lambda$ can be written as
\begin{equation}
\label{inttH}
\int_{\partial D^+}Hd\lambda= \int_{\partial B_{r}}Hd\lambda+ \int_{\partial
(D^+\backslash B_{r})}Hd\lambda.
\end{equation}

 Since $H$ is analytic and bounded in $D^+\backslash B_{r}$ , using Cauchy's Theorem and Jordan's Lemma we find that the integral on $\partial (D^+\backslash B_{r})$ is zero. The function $\nu_\kappa(\lambda)$ is bounded even if it is not analytic about the point  $i\sqrt{2}|\kappa|$. $H$ is continuous with respect to $\lambda$ and $\nu_\kappa(\lambda)$, thus $H$ is also bounded on $B_{r}$. Therefore, the integral of $H$ on $\partial B_{r}$ goes to zero as $r$ goes to zero. Hence, $\int_{\partial D^+}Hd\lambda=0$. Thus, using \eqref{abcd}, we obtain
 \begin{equation}
\begin{split}\label{abcde}
u(x,y,t)=&\frac{1}{(2\pi)^2}\int_{-\infty}^\infty\int_{-\infty}^{\infty}E(\kappa,\lambda;x,y,t) \widehat{u_0}(\kappa,\lambda)d\lambda d\kappa\\
&+\frac{1}{(2\pi)^2}\int_{-\infty}^\infty\int_{\partial D^+}E(\kappa,\lambda;x,y,t)G(\kappa,\lambda,t)d\lambda d\kappa.
\end{split}
\end{equation}

\begin{rem}[Asymptotic form]The wellposedness analysis is generally easier if the solution formula is given on the boundary of the asymptotic form of $D_\kappa^+$, namely on $\partial D_{R}^+$, where $D_{R}^+=D_{\kappa,R}\cap\mathbb{C}_+$ and
$$D_{R}:=\left\{\lambda\,|\,\arg\lambda\in \left(\frac{\pi}{4},\frac{\pi}{2}\right)+\frac{m\pi}{2}, m=0,1,2,3\right\}$$ for the biharmonic Schrödinger equation (see Figure \ref{drkappa}).
\begin{figure}[h]
	\centering
	\includegraphics[width=6cm]{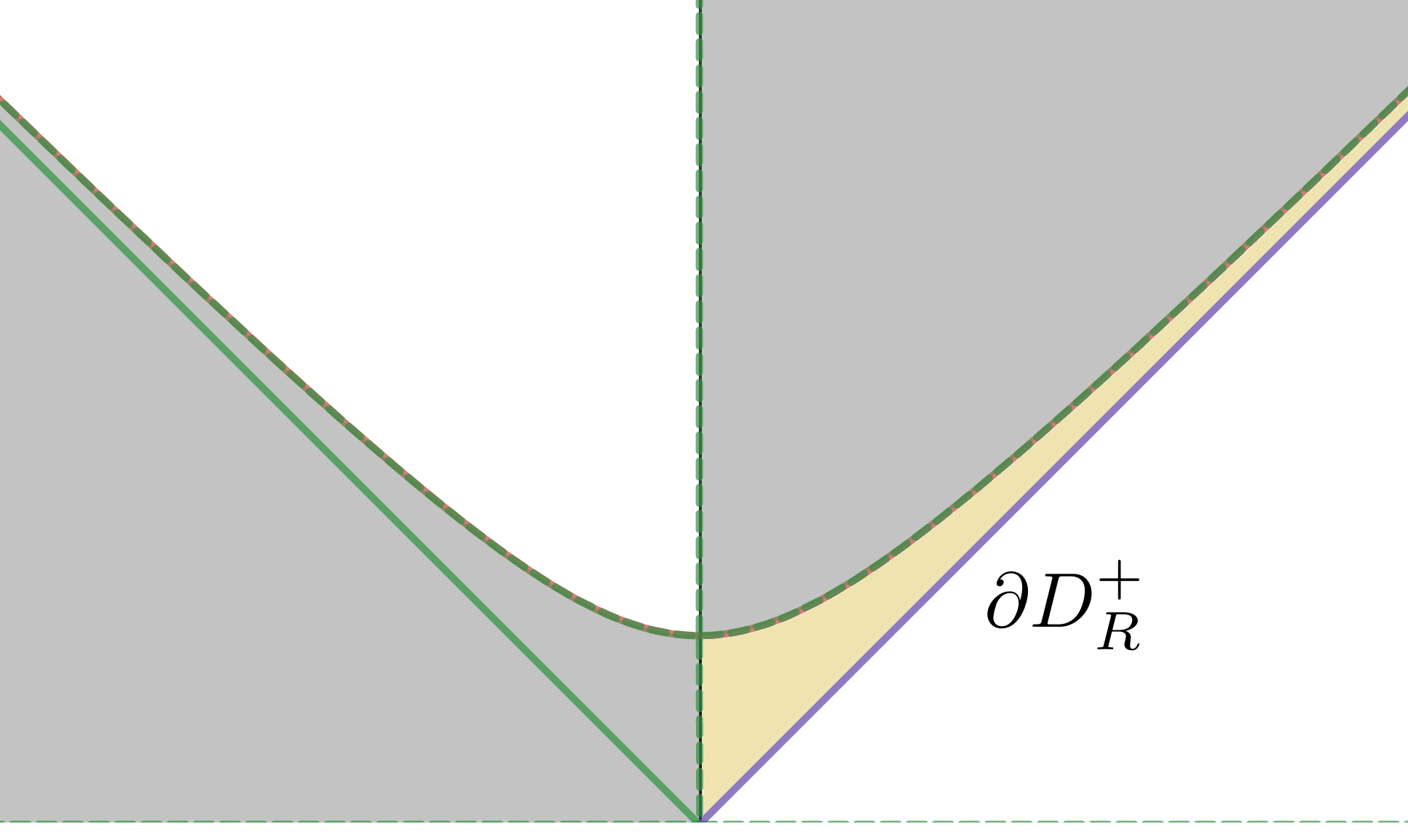}
	\caption{ }\label{drkappa}
\end{figure}
The deformation from $D_\kappa^+$ to $D_R^+$  is possible as the two regions $D_R$ and $D$ approach each other as $|\lambda|\rightarrow \infty$. Hence, the following asymptotic form of the integral representation is valid:
 \begin{equation}
\begin{split}\label{abcde}
u(x,y,t)=&\frac{1}{(2\pi)^2}\int_{-\infty}^\infty\int_{-\infty}^{\infty}E(\kappa,\lambda;x,y,t) \widehat{u_0}(\kappa,\lambda)d\lambda d\kappa\\
&+\frac{1}{(2\pi)^2}\int_{-\infty}^\infty\int_{\partial D_R^+}E(\kappa,\lambda;x,y,t)G(\kappa,\lambda,t)d\lambda d\kappa.
\end{split}
\end{equation}
\end{rem}

\begin{rem} The rigorous analysis of solutions of corresponding nonlinear IBVPs can be done in two standard stages:  (i) prove space and time estimates for the linear IBVP in function spaces (Sobolev, Bourgain, etc.) (ii) use a fixed point argument on the solution operator defined by the representation formula in which the nonhomogeneous terms are replaced by given nonlinear sources. The first stage can be carried out via a \emph{decompose-and-reunify} approach, see for instance \cite{Him17} and \cite{Ozs19}.  In this approach, one first decomposes the given linear IBVP into a homogeneous Cauchy problem, a nonhomogeneous Cauchy problem, and an inhomogeneous boundary value problem with zero initial datum and zero interior source. The integral representing the solution obtained via the UTM is very suitable for using tools from the oscillatory integral theory, Fourier and harmonic analysis to handle the inhomogeneous boundary value problem.  In fact, the explicit exponential terms in the UTM formula allows one to perform the associated wellposedness analysis in fractional function spaces and prove Strichartz type estimates. See for instance \cite[Section 3.2]{Ozs19} for a proof of a Strichartz estimate for the biharmonic Schrödinger equation that is based on an application of the Van der Corput lemma to the UTM formula.
\end{rem}
\bibliographystyle{amsplain}
\bibliography{controlrefs}
\end{document}